\documentclass[12pt]{amsart}
\usepackage[left=.75in,right=.75in,top=.75in,bottom=.75in]{geometry}
\usepackage{setspace}
\newtheorem{theorem}{Theorem}[section]

\newtheorem{lemma}[theorem]{Lemma}

\theoremstyle{definition}

\theoremstyle{remark}
\newtheorem{remark}[theorem]{Remark}
\numberwithin{equation}{section}



\begin{document}

\title[stability of the $p$-affine isoperimetric inequality]
 {On the stability of the $p$-affine isoperimetric inequality}
\author[M.N. Ivaki]{Mohammad N. Ivaki}
\address{Department of Mathematics and Statistics,
  Concordia University, Montreal, QC, Canada, H3G 1M8}
\curraddr{}
\email{mivaki@mathstat.concordia.ca}
\date{\today}

\dedicatory{}
\subjclass[2010]{Primary 52A40, 53C44, 53A04, 52A10, 53A15; Secondary 53A15}
\keywords{affine support function, affine normal flow, Hausdorff distance, stability of geometric inequalities, $p$-affine surface area, $p$-affine isoperimetric inequality}
\doublespace
\begin{abstract}
Employing the affine normal flow, we prove a stability version of the $p$-affine isoperimetric inequality for $p\geq1$ in $\mathbb{R}^2$ in the class of origin-symmetric convex bodies. That is, if $K$ is an origin-symmetric convex body in $\mathbb{R}^2$ such that it has area $\pi$ and its $p$-affine perimeter is close enough to the one of an ellipse with the same area, then, after applying a special linear transformation, $K$ is close to an ellipse in the Hausdorff distance.
\end{abstract}

\maketitle
\section{Introduction}

The setting for this paper is the $n$-dimensional Euclidean space, $\mathbb{R}^n.$ A compact convex subset of $\mathbb{R}^n$ with non-empty interior is called a \emph{convex body}.

Let $\Phi$ be a real valued function on convex bodies. Given a geometric inequality $\Phi(K)\geq 0$, for every convex body $K$ and with the equality case obtained only for a certain family of convex bodies, denoted by $\mathcal{F}$, a \emph{stability} version of $\Phi$ concerns the following question. Find a positive constant $\varepsilon_0$, and a positive function $f$, such that the following holds:
If for some $0<\varepsilon\leq\varepsilon_0$ we have
$$\Phi(K)\leq \varepsilon,$$
then there exists a convex body in $\mathcal{F}$, denoted by $L$, such that
$$d(K,L)\leq f(\varepsilon),$$
where $d(\cdot,\cdot)$ is an appropriate norm in the context of the geometric inequality. Here $f$ obeys the rule $\lim\limits_{\varepsilon\to0}f(\varepsilon)=0$
(see the beautiful survey of H. Groemer \cite{Groemer}).

Versions of stability have been investigated for several important inequalities, including a stability version of the Brunn-Minkowski inequality due to V.I. Diskant \cite{Dis}, stability of the Rogers-Shephard inequality by K.J. B\"{o}r\"{o}czky \cite{B1}, stability of the Blaschke-Santal\'{o} inequality and the affine isoperimetric inequality in $\mathbb{R}^n$ for $n\geq3$ by K.J. B\"{o}r\"{o}czky \cite{B}, stability of the reverse Blaschke-Santal\'{o} inequality by K.J. B\"{o}r\"{o}czky and D. Hug \cite{BH}, stability of the Pr\'{e}kopa-Leindler inequality by K. Ball and K.J. B\"{o}r\"{o}czky \cite{BB}, stability of a volume ratio by D. Hug and R. Schneider \cite{HS}, and more recently stability of the functional forms of the Blaschke-Santal\'{o} inequality by F. Barthe, K.J. B\"{o}r\"{o}czky and M. Fradelizi \cite{BBF}. Our aim in this paper is to prove a stability version of the $p$-affine isoperimetric inequality for $p\geq1$ in $\mathbb{R}^2$ in the class of origin-symmetric convex bodies.

In his seminal work \cite{Lutwak1,Lutwak2}, E. Lutwak extended the Brunn-Minkowski theory to the Brunn-Minkowski-Firey theory yielding impressive new results in convex geometry \cite{LYZ1,LYZ3,LYZ4,LYZ5}, stochastic geometry \cite{Gr1,Gr2}, differential geometry and differential equations \cite{CL,ivaki,LO,TW1,TW2,TW3, TW4}. One of these developments was extension of the notion of the affine surface area to $p$-affine surface areas for $p>1$. Subsequently, the notion of $p$-affine surface areas for $0<p<1$ has been introduced by D. Hug \cite{Hug}, for $-n<p<0$ by M. Meyer and E. Werner \cite{MW}, and for all $p\neq -n$ by C. Sch\"{u}tt and E. Werner in \cite{SW}. Later, in \cite{Ludwig,Ludwig1} it was observed by M. Ludwig that $p$-affine surface areas, $p\neq -n$, belong to a larger family, called $\phi$-affine surface areas. For $p\geq1$, the $p$-affine surface area of a convex body is related to the volume of the convex body by the $p$-affine isoperimetric inequality. For $p=1$, this is the well-known affine isoperimetric inequality due to W. Blaschke with the equality case characterized in the class of convex bodies with $\mathcal{C}^2$ boundary \cite{Bl}. The characterization of the equality in general is due to C.M. Petty \cite{Petty}. The $p$-affine isoperimetric inequality, for $p>1$, was proved by E. Lutwak \cite{Lutwak2}, including characterizing the equality case. The equality in the $p$-affine isoperimetric inequality is achieved only for ellipsoids centered at the origin. The $p$-affine isoperimetric inequality, for $p<1,~ p\neq -n$ was proved by E. Werner and D. Ye \cite{WY}. Their inequalities for $p<-n$ depend on the constant arising from the inverse Blaschke-Santal\'{o} inequality.

A stability version of the affine isoperimetric inequality, $p=1$, was presented by K.J. B\"{o}r\"{o}czky in $\mathbb{R}^n$ for $n\geq3$, \cite{B}. He proved that if $K$ be a convex body in $\mathbb{R}^{n}$ such that its affine surface is $\varepsilon$-close to the one of an ellipsoid, for a fixed $\varepsilon\in(0,\frac{1}{2})$, then $K$ is close to the unit ball in the Banach-Mazur distance. Here, close is an approximation of order $\varepsilon^{\frac{1}{6n}}|\log \varepsilon|^{\frac{1}{6n}}.$ Later in \cite{BB2}, the order of approximation was improved to $\varepsilon^{\frac{1}{3(n+1)}}|\log\varepsilon|^{\frac{4}{3(n+1)}}$. The case $n=2$ was not addressed either in \cite{BB2} or in \cite{B}. Later in \cite{BM}, K.J. B\"{o}r\"{o}czky and E. Makai proved a stability version of the Blaschke-Santal\'{o} inequality from which stability for the $p$-affine isoperimetric inequality
follows easily.
In this paper, using a different method, we prove a version of stability of the $p$-affine isoperimetric inequality for $p\geq1$ in the class of origin-symmetric convex bodies in $\mathbb{R}^2.$ The technique presented here to deal with stability, is new as it approaches the problem from the perspective of geometric flows and ODEs. However, the interaction between convex geometry and geometric flows is not new. There are several important contributions of geometric flows to convex geometry, for example, a proof of the affine isoperimetric inequality by B. Andrews using the affine normal flow \cite{BA2}, obtaining the necessary and sufficient conditions for the existence of a solution to the discrete $L_0$-Minkowski problem using crystalline curvature flow by A. Stancu \cite{S0,S5,S3} and independently by B. Andrews \cite{BA5}, and a proof of the $p$-affine isoperimetric inequality in the class of origin-symmetric convex bodies in $\mathbb{R}^2$ using the affine normal \cite{ivaki}. See \cite{S1,S2,S,S4} for more applications of flows, in particular, a newly defined family of centro-affine $p$-flows and their applications to centro-affine differential geometry by A. Stancu \cite{S,S4}.

Let $K$ be a convex body. The support function of $K$, denoted by $s_K$, is defined as
\begin{align*}
s_K&:\mathbb{S}^{n-1}\to\mathbb{R}\\
 s_{K}(z)&=\max_{y\in \partial K}\langle z,y\rangle,
\end{align*}
where $\langle z,y\rangle$ denotes the standard inner product of $z$ and $y$.

 Fix a compact, smooth, strictly convex hypersurface $\mathcal{M}$. Let $K$ be a strictly convex body, having the origin in its interior such that its boundary, denoted by $\partial K$, is smoothly embedded in $\mathbb{R}^{n}$ by
$$x_K:\mathcal{M}\to\mathbb{R}^{n}.$$
 Therefore, we have $x_K(\mathcal{M})=\partial K$. For simplicity, in the remainder of the paper, we will identify the domain of an embedding with its image. Let $\mathbf{N}_K(x)$ be the outward unit normal vector of $K$ for every $x\in \partial K$. The support function of $K$ has the following simple form
 $$s_{K}(z):= \langle \mathbf{N}_K^{-1}(z), z \rangle,$$
for each $z\in\mathbb{S}^{n-1}$, where $\mathbf{N}_K^{-1}:\mathbb{S}^{n-1}\to \partial K$ is the inverse of the Gauss map $\mathbf{N}_K$. We denote the standard metric on $\mathbb{S}^{n-1}$ by $\bar{g}_{ij}$ and the standard Levi-Civita connection of $\mathbb{S}^{n-1}$ by $\bar{\nabla}$.
We denote the Gauss curvature of $\partial K$ by $\mathcal{K}$ and remark that, as a function on $\partial K$, it is related to the support function of the convex body by $$\frac{1}{\mathcal{K}\circ\mathbf{N}_K^{-1}}:=\det_{\bar{g}}(\bar{\nabla}_i\bar{\nabla}_js+\bar{g}_{ij}s).$$

Furthermore, the affine support function of $K$, denoted by $\sigma$, as a function on $\partial K$ is defined by
$$\sigma(x)=\frac{\langle x,\mathbf{N}_K(x)\rangle}{{\mathcal{K}^{1/(n+1)}}(x)},$$
for all $x\in \partial K.$

For $p\geq 1$, the $p$-affine surface area of $K$ is defined by
$$\Omega_p(K)=\int_{\partial K}\frac{\mathcal{K}^{\frac{p}{n+p}}(x)}{\langle x,\mathbf{N}_K(x)\rangle^{\frac{n(p-1)}{n+p}}}d\mu_{\partial K}(x),$$
where $\mu_{\partial K}$ is the usual surface area measure on $\partial K.$ The $p$-affine surface area of a convex body is bounded by the volume via the $p$-affine isoperimetric inequality. If the centroid of $K$ is at the origin then
$$\left(\frac{\Omega_p^{n+p}(K)}{n^{n+p}V^{n-p}(K)}\right)^{\frac{1}{p}}\leq\omega_{n}^{2},$$
with the equality case only for ellipsoids centered at the origin.
Here, $V(K)$ is the volume of $K$ defined by $V(K)=\frac{1}{n}\int_{\partial K}\langle x,\mathbf{N}_K(x)\rangle d\mu_{\partial K}(x),$ and $\omega_{n}$ is the volume of the unit ball of $\mathbb{R}^n$.
We call the quantity $\left(\frac{\Omega_p^{n+p}(K)}{n^{n+p}V^{n-p}(K)}\right)^{1/p},$ the $p$-affine isoperimetric ratio of $K.$ In $\mathbb{R}^2$, it is more appropriate to use the notation $A(K)$ instead of $V(K)$ for the area of $K.$

We mention here that thanks by a theorem of A.D. Alexandrov (see P.M. Gruber \cite{Gr2}, page 22), the boundary of a convex body is twice differentiable in a generalized sense almost everywhere with respect to its Hausdorff measure. Furthermore, the Gauss map is also defined in a generalized sense almost everywhere with respect to the Hausdorff measure of the boundary of a convex body. Therefore, generalized notions of Gauss curvature and Gauss map are available for convex bodies which are not necessarily smooth. This in turn implies that the formula above of the $p$-affine surface area is still valid for all convex bodies.

Let $K$ and $L$ be two origin-symmetric convex bodies in $\mathbb{R}^{n}$ with respective support functions $s_K$ and $s_L$. Then the Hausdorff distance between $K$ and $L$ is defined by
$$d_{\mathcal{H}}(K,L)=\max_{\mathbb{S}^{n-1}}|s_K-s_L|.$$

In what follows, we mainly work in $\mathbb{R}^2.$ Without loss of generality, using John's lemma \cite{J}, if necessary, we can assume that $c_1\leq s_{K}\leq c_2$ for universal constants $c_1$ and $c_2$, depending only on $A(K)$.

 \begin{theorem}[Main Theorem]Let $p\geq1.$ There exists an $\varepsilon_p>0$, depending on $p$, such that the following holds. Let $K$ be an origin-symmetric convex body with area $\pi$. If for an $\varepsilon$, $0<\varepsilon<\varepsilon_p$
 $$\left(\frac{\Omega_p^{2+p}(K)}{2^{2+p}A^{2-p}(K)}\right)^{\frac{1}{p}}>\pi^2(1-\varepsilon),$$
then there exist a disk $\mathcal{D}$, an ellipse $\mathcal{E}$ and a special linear transformation $T$ such that
$$\mathcal{E}\subseteq TK\subseteq \left(1+\left(\frac{4}{3}\right)^{\frac{3}{4}}\frac{2c_2}{c_1^2}\varepsilon^{\frac{3}{10}}\right)\mathcal{D},$$
and
$$d_{\mathcal{H}}\left(\mathcal{E},\left(1+\left(\frac{4}{3}\right)^{\frac{3}{4}}\frac{2c_2}{c_1^2}\varepsilon^{\frac{3}{10}}\right)
\mathcal{D}\right)
<C_p\varepsilon^{\frac{3}{10}},$$
for a universal constant $C_p.$\\
In particular,
$$d_{\mathcal{H}}\left(TK,\mathcal{E}\right)
<C_p\varepsilon^{\frac{3}{10}}.$$
\end{theorem}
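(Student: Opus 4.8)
The plan is to use the affine normal flow as an engine: run it from $K$, exploit that the $p$-affine isoperimetric ratio is monotone along it and converges to the disk's value $\pi^2$, and upgrade this monotonicity to a \emph{quantitative} statement turning a small ratio deficit into an explicitly controlled deviation from an ellipse.

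First I would recall the behaviour of the affine normal flow $\partial_t x=-\mathcal K^{1/(n+1)}\mathbf N_K$ in the plane (B.~Andrews \cite{BA2}; \cite{ivaki} for the origin-symmetric case): starting from the origin-symmetric body $K$ of area $\pi$, the flow exists on a maximal interval, the enclosed area decreases, and after rescaling to keep the area equal to $\pi$ and composing with a time-dependent family $\Phi_t\in SL(2)$ one obtains origin-symmetric bodies $\widehat K_t$ converging in $\mathcal C^{\infty}$, exponentially fast, to the unit disk $\mathcal D$. Along this normalized flow $\Omega_p(\widehat K_t)$ is nondecreasing and tends to $\Omega_p(\mathcal D)=2\pi$, equivalently the $p$-affine isoperimetric ratio increases to $\pi^2$; since the hypothesis gives $\pi^2-\bigl(\Omega_p^{2+p}(K)/(2^{2+p}A^{2-p}(K))\bigr)^{1/p}<\pi^2\varepsilon$, integrating the monotonicity yields $\int_0^{\infty}\tfrac{d}{dt}\Omega_p(\widehat K_t)\,dt<C_p\,\varepsilon$.

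The heart of the argument is a quantitative form of this monotonicity. I would show $\tfrac{d}{dt}\Omega_p(\widehat K_t)\ge\lambda\,\Psi(\widehat K_t)$, where $\Psi$ is a nonnegative functional with Dirichlet-type structure built from the affine support function $\sigma$ — e.g.\ $\Psi(L)=\int_{\partial L}\abs{\bar\nabla\sigma}^2$, or $\int_{\partial L}(\sigma-\bar\sigma)^2$ after an affine Wirtinger inequality on $\mathbb S^1$, with $\bar\sigma$ the mean of $\sigma$ — which vanishes precisely for origin-centered ellipses. Together with a priori $\mathcal C^{k}$ bounds along the flow (from the smooth exponential convergence and from John's normalization $c_1\le s_K\le c_2$) this controls how fast $\Psi$ can change, so that $\Psi(\widehat K_t)$ stays comparable to $\Psi(K)$ on a short interval $[0,\tau]$. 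Combining with the integral bound gives an estimate of the shape $\Psi(K)\le C_p(\varepsilon/\tau+(\text{a priori constant})\,\tau^{m})$; optimizing over the flow time $\tau$ yields $\Psi(K)\le C_p\,\varepsilon^{\alpha}$, and then unwinding definitions — via $\sigma=s\,(\bar\nabla^2 s+s)^{1/(n+1)}$ on $\mathbb S^1$, an elliptic estimate for the operator $u\mapsto\bar\nabla^2 u+nu$ restricted to the Fourier modes surviving the $SL(2)$ reduction (only even modes $\ge 4$, since $K$ is origin-symmetric and the modes $0,\pm2$ are absorbed by scaling and by the rounding map), and a Gagliardo--Nirenberg interpolation $\|u\|_{\infty}\lesssim\|u\|_{2}^{3/4}\|u''\|_{2}^{1/4}$ against the uniform second-order bound — produces $\|s_{TK}-1\|_{\infty}\le(4/3)^{3/4}\tfrac{2c_2}{c_1^{2}}\varepsilon^{3/10}$ for the composite map $T\in SL(2)$ that rounds out $K$, the factor $\tfrac{2c_2}{c_1^{2}}$ recording the rescaling back to the original body through John's lemma and the constant $(4/3)^{3/4}$ coming out of the interpolation/optimization.

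Finally I would package this geometrically: with $\mathcal D$ the appropriate disk of area $\pi$ and $\mathcal E$ the largest ellipse (equivalently disk) inscribed in $TK$, the uniform estimate gives $\mathcal E\subseteq TK\subseteq\bigl(1+(4/3)^{3/4}\tfrac{2c_2}{c_1^{2}}\varepsilon^{3/10}\bigr)\mathcal D$, and since $\mathcal E$ and $(1+\cdots)\mathcal D$ are then within $C_p\varepsilon^{3/10}$ of one another in Hausdorff distance, the squeeze $\mathcal E\subseteq TK\subseteq(1+\cdots)\mathcal D$ forces $d_{\mathcal H}(TK,\mathcal E)<C_p\varepsilon^{3/10}$. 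I expect the main obstacle to be the quantitative monotonicity $\tfrac{d}{dt}\Omega_p\ge\lambda\Psi$ together with the companion control on $\tfrac{d}{dt}\Psi$ — requiring a careful Bochner-type computation along the affine normal flow and the right Wirtinger constant on the circle — and, intertwined with it, the passage from an integral-in-time bound to a pointwise bound at $t=0$, which is the step forcing the non-sharp exponent $3/10$ rather than the $\varepsilon^{1/2}$ one might naively hope for.
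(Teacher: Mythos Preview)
Your overall strategy---run the affine normal flow, use the quantitative monotonicity of the $p$-affine isoperimetric ratio in the form $\tfrac{d}{dt}(\text{ratio})\ge d_p\,\Psi$ with $\Psi$ a Dirichlet energy of the affine support function $\sigma$, then trade the integral bound $\int\Psi\le C\varepsilon$ for a pointwise estimate---matches the paper's. The genuine gap is your step ``$\Psi(\widehat K_t)$ stays comparable to $\Psi(K)$ on $[0,\tau]$'': you justify this via ``a priori $\mathcal C^k$ bounds along the flow,'' but those bounds are available only for $t\ge t_0>0$ (from the smoothing/convergence), not uniformly as $t\to 0$, and John's lemma gives only $c_1\le s_K\le c_2$, a $C^0$ estimate. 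Since $\Psi$ involves derivatives of $\sigma$ (hence curvature), you have no control on $\partial_t\Psi$ near $t=0$ that is independent of the higher regularity of the initial body, and the optimization $\Psi(K)\le C(\varepsilon/\tau+\tau^m)$ is unjustified.

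The paper circumvents this entirely: it does \emph{not} try to propagate the second-order quantity $\Psi$ back to $t=0$. Instead it picks $t_\ast\in[0,\delta]$ minimizing the integrand, so that $\Psi(K_{t_\ast})\le C\varepsilon/\delta$; this gives $\bigl|\sigma_M^{(1-p)/(p+2)}-\sigma_m^{(1-p)/(p+2)}\bigr|\le C\sqrt{\varepsilon/\delta}$ at time $t_\ast$, and then the direct comparison lemma (bounds on $\sigma$ imply containment between two explicit ellipses $\mathcal E_{in}\subseteq K_{t_\ast}\subseteq\mathcal E_{out}$) replaces your Fourier/elliptic/Gagliardo--Nirenberg route. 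The transfer back to $t=0$ is purely $C^0$: Andrews' short-time displacement bound $s(z,t)\ge s(z,0)-(4/3)^{3/4}\tfrac{c_2}{c_1}t^{3/4}$ gives $K\subseteq K_{t_\ast}+(4/3)^{3/4}\tfrac{c_2}{c_1}\delta^{3/4}B_1$, which is where the constant $(4/3)^{3/4}$ actually comes from (not from interpolation). Balancing $\sqrt{\varepsilon/\delta}$ against $\delta^{3/4}$ yields $\delta=\varepsilon^{2/5}$ and the exponent $3/10$. So the key lemma you are missing is this $C^0$ displacement estimate for the unnormalized flow; it is precisely what lets one avoid any higher-order control near the initial time.
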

To prove this theorem we will implement the affine normal flow on curves. We only use results on the short time behavior of this flow. Fix a smooth, origin-symmetric, strictly convex curve $\mathcal{M}$.
Let $K$ be a compact, origin-symmetric, strictly convex body, smoothly embedded in $\mathbb{R}^2$. We denote the space of such convex bodies by $\mathcal{K}_{sym}$. Let
 $$x_K:\mathcal{M}\to\mathbb{R}^2,$$
be a smooth embedding of $\partial K$, the boundary of $K\in \mathcal{K}_{sym}$.
 We denote the curvature of $\partial K$ by $\kappa$, as a function on $ \mathcal{M}$, the curvature is related to the support function by
 $$\frac{1}{\kappa(\mathbf{n}_K^{-1}(z))}:=s_{\theta\theta}(z)+s(z),$$
 where $\theta$ is the angle parameter on $\mathbb{S}^1$ identified with $z$, and $\mathbf{n}_K^{-1}$ is the inverse of the Gauss map of $K$ denoted by $\mathbf{n}_K.$
Let $K_0:=K\in \mathcal{K}_{sym}$. We consider a family $\{K_t\}\in \mathcal{K}_{sym}$, and their associated smooth embeddings $x:\mathcal{M}\times[0,T)\to \mathbb{R}^2$, which are evolving according to the affine normal flow, namely,
 \begin{equation}\label{e: flow0}
 \partial_{t}x(\cdot,t):=-\kappa^{\frac{1}{3}}(\cdot,t)\, \mathbf{n}_{K_t}(\cdot),~~
 x(\cdot,0)=x_{K_0}(\cdot),~~ x(\cdot ,t)=x_{K_t}(\cdot).
 \end{equation}
 Note that at each time $t$ we have $x(\mathcal{M},t)=\partial K_t.$\\
The well-known affine normal flow was addressed by G. Sapiro and A. Tannenbaum \cite{ST} and by B. Andrews in more generality \cite{BA2,BA3}. Andrews investigated the affine normal flow of compact hypersurfaces in any dimension and showed that the volume preserving flow evolves any convex initial bounded open set, not necessarily smooth, exponentially fast, in the $\mathcal{C}^{\infty}$ topology, to an ellipsoid, \cite{BA3}.

We point out here that we need only to prove the main theorem for smooth $K$. The reason is the instantaneous smoothing property of the affine normal flow \cite{BA3} and monotonicity of the $p$-affine isoperimetric inequality along the affine normal flow \cite{ivaki}.

\section{Stability of the $p$-affine isoperimetric inequality}
Throughout this section we assume that $K_0=K$ is smooth and $A(K)=\pi$.
\subsection{Preliminaries}
We list several lemmas and a theorem necessary for our proof of the main theorem.
\begin{lemma}[Containment Principle]\cite{S} \label{prop: containment principle}
Let $K^{in}$ and $K^{out}$ be two convex
bodies in $\mathcal{K}_{sym}$ such that $K^{in}\subset K^{out}$, then $K^{in}_t\subseteq K^{out}_t$ for as long as
the solutions $K^{in}_t$ and $K^{out}_t$ of (\ref{e: flow0}) $($with given initial data
$K^{in}_0=K^{in}$, $K^{out}_0=K^{out}$$)$ exist
in $\mathcal{K}_{sym}.$
\end{lemma}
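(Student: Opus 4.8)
The plan is to reduce the containment principle to the weak maximum principle for a linear parabolic equation on $\mathbb{S}^1$ by passing from the bodies to their support functions. Since $K^{in}$ and $K^{out}$ lie in $\mathcal{K}_{sym}$, they are smooth and strictly convex, so each is recovered from its support function, and the inclusion $K^{in}\subseteq K^{out}$ is equivalent to the pointwise ordering $s_{K^{in}}\leq s_{K^{out}}$ on $\mathbb{S}^1$. Thus it suffices to show that this ordering is preserved by the flow \eqref{e: flow0} for as long as both solutions remain in $\mathcal{K}_{sym}$.

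First I would record the support-function form of the flow. Parametrizing the speed by the normal angle $\theta$ and using $1/\kappa=s_{\theta\theta}+s$, the evolution $\partial_t x=-\kappa^{1/3}\mathbf{n}_{K_t}$ becomes
\begin{equation*}
\partial_t s=-\kappa^{1/3}=-(s_{\theta\theta}+s)^{-1/3}.
\end{equation*}
Set $s^{out}=s_{K^{out}_t}$, $s^{in}=s_{K^{in}_t}$, and $w=s^{out}-s^{in}$, so that $w\geq 0$ at $t=0$. With $a=s^{out}_{\theta\theta}+s^{out}$ and $b=s^{in}_{\theta\theta}+s^{in}$, applying the mean value theorem to $f(u)=u^{-1/3}$ pointwise in $(\theta,t)$ gives $f'(\xi)=-\tfrac13\xi^{-4/3}$ for some $\xi$ between $a$ and $b$, and since $b-a=-(w_{\theta\theta}+w)$,
\begin{equation*}
\partial_t w=f(b)-f(a)=\alpha\,(w_{\theta\theta}+w),\qquad \alpha:=\tfrac13\xi^{-4/3}>0.
\end{equation*}
This is a \emph{linear} parabolic equation for $w$ with strictly positive leading coefficient $\alpha$. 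Because both solutions stay in $\mathcal{K}_{sym}$, on any closed subinterval $[0,T']$ of the existence time the curvatures are bounded above and below away from zero, so $a,b$, hence $\xi$ and $\alpha$, satisfy $0<\alpha\leq\Lambda$ for a constant $\Lambda$.

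The remaining step is a maximum-principle argument, and the one genuine obstacle is the \emph{positive} zeroth-order term $\alpha w$: at a prospective first negative minimum of $w$ one has $w_{\theta\theta}\geq 0$ but $\alpha w<0$, so $\partial_t w\geq 0$ cannot be read off directly. I would remove this term by the exponential substitution $v=e^{-\Lambda t}w$, which satisfies
\begin{equation*}
\partial_t v=\alpha\,v_{\theta\theta}+(\alpha-\Lambda)\,v,\qquad \alpha-\Lambda\leq 0,
\end{equation*}
so that the zeroth-order coefficient is now nonpositive. Let $m(t)=\min_{\theta}v(\cdot,t)$; at a spatial minimizer $v_{\theta\theta}\geq 0$, hence by Hamilton's trick $m'(t)\geq(\alpha-\Lambda)\,m(t)$ in the sense of lower derivatives. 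Whenever $m(t)<0$ the right-hand side is $\geq 0$ because $\alpha-\Lambda\leq 0$, so $m$ cannot decrease through the value $0$; since $m(0)\geq 0$, this forces $m\geq 0$, i.e.\ $v\geq 0$ and hence $w\geq 0$, on $[0,T']$.

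As $T'<T$ was arbitrary, I conclude $s_{K^{in}_t}\leq s_{K^{out}_t}$ on the entire common existence interval, which is precisely $K^{in}_t\subseteq K^{out}_t$. I expect no difficulty beyond the handling of the zeroth-order term: the reduction to support functions is what makes the comparison principle immediate here, since both evolving bodies are graphs over the same Gauss circle and thus satisfy the \emph{same} scalar parabolic equation, whose difference is governed by the linear equation above.
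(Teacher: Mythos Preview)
Your argument is correct: the support-function reformulation $\partial_t s=-(s_{\theta\theta}+s)^{-1/3}$ is standard, the linearization via the mean value theorem is valid, and the exponential weight $v=e^{-\Lambda t}w$ correctly neutralizes the positive zeroth-order term so that the weak maximum principle on $\mathbb{S}^1\times[0,T']$ applies. The bound $\alpha\leq\Lambda$ is justified because on any compact subinterval of the existence time both evolving curves remain smooth and strictly convex, so $a,b$ are uniformly bounded below.

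However, note that the paper does not supply its own proof of this lemma: it is stated with the citation \cite{S} and used as a known result. So there is no ``paper's approach'' to compare against. What you have written is the standard parabolic comparison argument one would expect for a curvature flow written in support-function form; it is entirely in the spirit of how such containment principles are proved in the curve-shortening and Gauss-curvature-flow literature (cf.\ Andrews \cite{BA3}), and it would serve perfectly well as a self-contained justification of the lemma.
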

\begin{lemma}[Evolution equation of the area]\cite{ivaki}\label{lem: ev of A}
As $\{K_t\}$ evolve by evolution equation (\ref{e: flow0}), $A(K_t)$  evolves by $\frac{d}{dt}A(K_t)=-\Omega_1(K_t).$
In particular, $A(K_t)$ is decreasing.
\end{lemma}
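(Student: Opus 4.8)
The plan is to differentiate the classical support-function formula for planar area, using the relation $s+s_{\theta\theta}=1/\kappa$ already recorded above. First I would set up the two ingredients. In $\mathbb{R}^2$ the area of $K_t$ is
\[
A(K_t)=\frac{1}{2}\int_{\mathbb{S}^1} s\,(s+s_{\theta\theta})\,d\theta=\frac{1}{2}\int_{\mathbb{S}^1}\bigl(s^2-s_\theta^2\bigr)\,d\theta,
\]
the second equality following from one integration by parts over $\mathbb{S}^1$. On the other side, specializing the definition of $\Omega_p$ to $n=2$, $p=1$ gives $\Omega_1(K_t)=\int_{\partial K_t}\kappa^{1/3}\,d\mu_{\partial K_t}$; since the arc-length element satisfies $d\mu_{\partial K_t}=\kappa^{-1}\,d\theta$, this reads $\Omega_1(K_t)=\int_{\mathbb{S}^1}\kappa^{-2/3}\,d\theta$.

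Next I would determine how the support function evolves under (\ref{e: flow0}). Writing $s(z,t)=\langle x(\mathbf{n}_{K_t}^{-1}(z),t),z\rangle$ and differentiating in $t$ with $z$ fixed, the contribution coming from the motion of the support point $\mathbf{n}_{K_t}^{-1}(z)$ is tangential and hence drops out by the first-order optimality of that point as the maximizer of $\langle\cdot,z\rangle$. Since the outward normal there equals $z$, this leaves
\[
\partial_t s=\langle\partial_t x,z\rangle=\langle-\kappa^{1/3}\mathbf{n}_{K_t},z\rangle=-\kappa^{1/3}.
\]

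With $\partial_t s=-\kappa^{1/3}$ in hand, I would differentiate the area formula under the integral sign. The two cross terms combine after integrating $\int_{\mathbb{S}^1} s\,\partial_t s_{\theta\theta}\,d\theta=\int_{\mathbb{S}^1} s_{\theta\theta}\,\partial_t s\,d\theta$ by parts twice (periodicity kills the boundary terms), yielding
\[
\frac{d}{dt}A(K_t)=\int_{\mathbb{S}^1}\partial_t s\,(s+s_{\theta\theta})\,d\theta=\int_{\mathbb{S}^1}(-\kappa^{1/3})\,\kappa^{-1}\,d\theta=-\int_{\mathbb{S}^1}\kappa^{-2/3}\,d\theta=-\Omega_1(K_t).
\]
Because $K_t$ is strictly convex we have $\kappa>0$, so $\Omega_1(K_t)>0$ and $A(K_t)$ is strictly decreasing, which is the final assertion.

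The computation is routine; the only step demanding care is the identity $\partial_t s=-\kappa^{1/3}$, namely that only the normal part of the velocity enters the rate of change of the support function. I would either justify it by the envelope argument sketched above or, equivalently, bypass the support function altogether by invoking the first-variation principle $\frac{d}{dt}A(K_t)=\int_{\partial K_t}\langle\partial_t x,\mathbf{n}_{K_t}\rangle\,d\mu_{\partial K_t}$; under this principle the tangential part of $\partial_t x$ drops out automatically, and substituting $\partial_t x=-\kappa^{1/3}\mathbf{n}_{K_t}$ gives $\frac{d}{dt}A(K_t)=-\int_{\partial K_t}\kappa^{1/3}\,d\mu_{\partial K_t}=-\Omega_1(K_t)$ directly.
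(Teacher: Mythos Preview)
Your proof is correct. The paper does not actually supply a proof of this lemma; it is quoted from \cite{ivaki} without argument, so there is no in-paper proof to compare against. Your computation---deriving $\partial_t s=-\kappa^{1/3}$, differentiating the support-function area formula, and integrating by parts---is the standard one, and the alternative via the first-variation formula $\frac{d}{dt}A(K_t)=\int_{\partial K_t}\langle\partial_t x,\mathbf{n}_{K_t}\rangle\,d\mu_{\partial K_t}$ is equally valid and arguably cleaner.
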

We recall the following affine isoperimetric type inequalities involving time derivative of $\Omega_p$ from Lemma 6.1 in \cite{ivaki} along the affine normal flow.
 \begin{lemma}[$\Omega_p$ along the affine normal flow]\cite{ivaki}\label{lem: controlling derivative of $l$-affine length along affine normal flow}
As $\{K_t\}$ evolve by evolution equation (\ref{e: flow0}), the following affine isoperimetric inequalities hold. \\
If $1\leq p\leq 2$, then
\begin{align*}
\frac{d}{dt}\Omega_p(K_t)\geq \frac{p-2}{p+2}\frac{\Omega_p(K_t)\Omega_1(K_t)}{A(K_t)}+\frac{2(p-1)(4p^2+3p+2)}{(p+2)^3}\int_{\partial K_t}\sigma^{-1-\frac{3p}{p+2}}\sigma_{\mathfrak{s}}^2d\mathfrak{s},
\end{align*}
while, if $p\geq2$, we then have
\begin{align*}
\frac{d}{dt}\Omega_p(K_t)\geq\frac{p-2}{p+2}\frac{\Omega_p(K_t)\Omega_1(K_t)}{A(K_t)}
+\frac{6p}{(p+2)^2}\int_{\partial K_t}\sigma^{-1-\frac{3p}{p+2}}\sigma_{\mathfrak{s}}^2d\mathfrak{s}.
\end{align*}
Here, $\mathfrak{s}$ is the affine arc-length of the evolving boundary curve $\partial K_t.$ We point out here that for a curve $\partial K$
$$d\mathfrak{s}=\kappa^{\frac{1}{3}}de,$$ where $e$ is the Euclidean arc-length of $\partial K.$
 \end{lemma}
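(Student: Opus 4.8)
The plan is to rewrite every quantity as an affine-invariant moment of the affine support function against affine arc length, differentiate along \eqref{e: flow0}, reduce to an exact identity, and then split into the two stated regimes according to the sign of a single coefficient.

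\emph{Reformulation.} Set $\gamma=-\tfrac{2(p-1)}{p+2}\le 0$. Substituting $\sigma=\langle x,\mathbf n_K\rangle\kappa^{-1/3}$ and $d\mathfrak{s}=\kappa^{1/3}de$ into the definitions, I would first check the three identities
\begin{equation*}
\Omega_p(K_t)=\int_{\partial K_t}\sigma^{\gamma}\,d\mathfrak{s},\qquad \Omega_1(K_t)=\int_{\partial K_t}d\mathfrak{s},\qquad 2A(K_t)=\int_{\partial K_t}\sigma\,d\mathfrak{s},
\end{equation*}
and note that the exponent in the statement is $-1-\tfrac{3p}{p+2}=\gamma-2$. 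Thus $\Omega_p,\Omega_1,A$ are the $\gamma$-th, $0$-th and $1$-st moments of the \emph{single} positive function $\sigma$ against the \emph{single} measure $d\mathfrak{s}$, which is what makes the final comparison possible.

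\emph{Evolution equations and the exact identity.} From $\partial_t s=-\kappa^{1/3}$, a direct computation in the angle parametrization (writing $u:=\kappa^{1/3}$, $\sigma=su^{-1}$) yields the clean evolution equations $\partial_t\sigma=-1-\tfrac13\mu\sigma$ and $\partial_t(d\mathfrak{s})=-\tfrac23\mu\,d\mathfrak{s}$, where $\mu$ is the affine curvature; the key algebraic fact is that the quantity $u^{3}(u_{\theta\theta}+u)$ occurring in both equals exactly $\mu$. I would also record the affine Gauss equation $\sigma_{\mathfrak{s}\mathfrak{s}}+\mu\sigma=1$, which follows from $\sigma=[x,x_{\mathfrak{s}}]$ (with $[\cdot,\cdot]$ the determinant) together with the affine Frenet relations $[x_{\mathfrak{s}},x_{\mathfrak{s}\mathfrak{s}}]=1$ and $x_{\mathfrak{s}\mathfrak{s}\mathfrak{s}}=-\mu x_{\mathfrak{s}}$. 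Differentiating $\Omega_p=\int\sigma^{\gamma}d\mathfrak{s}$, substituting the two evolution equations, eliminating $\mu$ via $\mu\sigma=1-\sigma_{\mathfrak{s}\mathfrak{s}}$, and integrating by parts once (no boundary terms, the curve being closed) gives the \emph{exact} identity
\begin{equation*}
\frac{d}{dt}\Omega_p(K_t)=\frac{2(p-2)}{p+2}\int_{\partial K_t}\sigma^{\gamma-1}d\mathfrak{s}+\frac{6p}{(p+2)^2}\int_{\partial K_t}\sigma^{\gamma-2}\sigma_{\mathfrak{s}}^{2}\,d\mathfrak{s}.
\end{equation*}

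\emph{The case $p\ge2$.} Everything now hinges on comparing $\int\sigma^{\gamma-1}d\mathfrak{s}$ with $\tfrac{\Omega_p\Omega_1}{2A}$, in the direction dictated by the sign of $\tfrac{2(p-2)}{p+2}$. Since $t\mapsto\log\int\sigma^{t}d\mathfrak{s}$ is convex and the exponent pairs $(\gamma-1,1)$, $(\gamma,0)$ share the midpoint $\gamma/2$ with the former at least as spread out (indeed $|2-\gamma|\ge|\gamma|$ as $\gamma\le0$), a Chebyshev-type moment inequality gives $\int_{\partial K_t}\sigma^{\gamma-1}d\mathfrak{s}\ge\tfrac{\Omega_p(K_t)\Omega_1(K_t)}{2A(K_t)}$. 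For $p\ge2$ the coefficient $\tfrac{2(p-2)}{p+2}$ is nonnegative, so inserting this lower bound into the exact identity reproduces the claimed inequality verbatim.

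\emph{The case $1\le p\le2$ (the main obstacle).} Here the coefficient is nonpositive, so I need the reverse, quantitative estimate
\begin{equation*}
\int_{\partial K_t}\sigma^{\gamma-1}d\mathfrak{s}\le\frac{\Omega_p(K_t)\Omega_1(K_t)}{2A(K_t)}+\Big(\frac{2p+1}{p+2}\Big)^{2}\int_{\partial K_t}\sigma^{\gamma-2}\sigma_{\mathfrak{s}}^{2}\,d\mathfrak{s},
\end{equation*}
controlling the moment excess by the weighted Dirichlet energy. Inserting this and collecting the two gradient contributions produces the coefficient $\tfrac{6p}{(p+2)^2}+\tfrac{2(p-2)}{p+2}\big(\tfrac{2p+1}{p+2}\big)^{2}=\tfrac{2(p-1)(4p^2+3p+2)}{(p+2)^3}$, exactly the stated constant (and equal to $\tfrac34$ at $p=2$, so the two regimes match continuously). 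Establishing this reverse inequality with the sharp constant $\big(\tfrac{2p+1}{p+2}\big)^2$ is the hard part; I expect origin-symmetry to be essential, since $\sigma(-x)=\sigma(x)$ forces $\sigma$ to be antipodally periodic in affine arc length, removing the lowest oscillation mode and supplying the sharp Poincaré/Wirtinger constant. The verifications in the reformulation step and the derivation of the two evolution equations are then routine.
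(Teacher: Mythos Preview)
The paper itself does not prove this lemma; it is quoted verbatim from Lemma~6.1 of \cite{ivaki}. So there is no ``paper's own proof'' to compare against, only the external reference. That said, your outline can still be assessed on its merits.

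Your reformulation is correct: with $\gamma=-\tfrac{2(p-1)}{p+2}$ one has $\Omega_p=\int\sigma^{\gamma}d\mathfrak{s}$, $\Omega_1=\int d\mathfrak{s}$, $2A=\int\sigma\,d\mathfrak{s}$, and $-1-\tfrac{3p}{p+2}=\gamma-2$. The evolution laws $\partial_t\sigma=-1-\tfrac13\mu\sigma$, $\partial_t(d\mathfrak{s})=-\tfrac23\mu\,d\mathfrak{s}$ and the affine Gauss relation $\sigma_{\mathfrak{s}\mathfrak{s}}+\mu\sigma=1$ are standard, and your integration by parts does yield the exact identity
\[
\frac{d}{dt}\Omega_p=\frac{2(p-2)}{p+2}\int\sigma^{\gamma-1}d\mathfrak{s}+\frac{6p}{(p+2)^2}\int\sigma^{\gamma-2}\sigma_{\mathfrak{s}}^{2}\,d\mathfrak{s}.
\]
For $p\ge2$ your moment argument is valid: since $t\mapsto\log\int\sigma^{t}d\mathfrak{s}$ is convex and the pair $(\gamma-1,1)$ majorizes $(\gamma,0)$ (same sum, larger spread as $\gamma\le0$), one gets $\int\sigma^{\gamma-1}d\mathfrak{s}\cdot\int\sigma\,d\mathfrak{s}\ge\int\sigma^{\gamma}d\mathfrak{s}\cdot\int d\mathfrak{s}$, which is precisely $\int\sigma^{\gamma-1}d\mathfrak{s}\ge\Omega_p\Omega_1/(2A)$. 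Multiplying by the nonnegative coefficient finishes that case.

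The genuine gap is the case $1\le p\le2$. You explicitly acknowledge that the reverse estimate
\[
\int\sigma^{\gamma-1}d\mathfrak{s}\le\frac{\Omega_p\Omega_1}{2A}+\Bigl(\frac{2p+1}{p+2}\Bigr)^{2}\int\sigma^{\gamma-2}\sigma_{\mathfrak{s}}^{2}\,d\mathfrak{s}
\]
is ``the hard part'' and do not prove it. Two warnings about the route you suggest. First, linearizing around constant $\sigma$ (set $\sigma=1+\varepsilon\phi$ with $\int\phi\,d\mathfrak{s}=0$) reduces your inequality to $(1-\gamma)\int\phi^{2}\,d\mathfrak{s}\le\bigl(\tfrac{\gamma-2}{2}\bigr)^{2}\int\phi_{\mathfrak{s}}^{2}\,d\mathfrak{s}$, a Wirtinger inequality whose constant depends on the total affine length $\Omega_1$; there is no universal bound unless you feed in more geometry than mere periodicity of $\sigma$. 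Antipodal symmetry alone kills the first Fourier mode but does not bound the period, so the ``sharp Poincar\'e constant'' you invoke is not available from symmetry by itself. Second, the lemma as quoted (and as used later in the paper) does not appear to require origin-symmetry, so your expectation that symmetry is essential is likely misplaced; whatever mechanism makes the $1\le p\le2$ case work in \cite{ivaki} must use the affine structure (e.g.\ the constraint $\sigma_{\mathfrak{s}\mathfrak{s}}+\mu\sigma=1$, or a different splitting of $\tfrac{d}{dt}\Omega_p$) rather than a bare functional inequality for periodic $\sigma$. As written, the $1\le p\le2$ argument is incomplete.
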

 \begin{remark}\label{re: relation affine support and area}
The affine support function is constant for an origin-centered ellipse and the relation between its value
and the area of the ellipse is as follows. For an ellipse $\mathcal{E}$, denote its constant affine support function by $\sigma_{\mathcal{E}}$. We have
$$\sigma_{\mathcal{E}}=\left(\frac{A(\mathcal{E})}{\pi}\right)^{2/3}.$$
 \end{remark}
\begin{lemma}[Stability of the affine support function]\cite{ivaki}\label{lem: ellipsoid app} Suppose that $K$ is a convex body in $\mathcal{K}_{sym}$. If $m\le\sigma\le M$ for some positive numbers $m$ and $M$, then there exist two ellipses $\mathcal{E}_{in}$ and $\mathcal{E}_{out}$ such that $\mathcal{E}_{in}\subseteq K\subseteq \mathcal{E}_{out}$ and
$$\left(\frac{A(\mathcal{E}_{in})}{\pi}\right)^{2/3}=m,~~~ \left(\frac{A(\mathcal{E}_{out})}{\pi}\right)^{2/3}=M .$$
\end{lemma}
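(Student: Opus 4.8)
The plan is to reduce everything to the support function $s=s_K$ on $\mathbb{S}^1$ and to exploit the explicit ODE satisfied by ellipses of prescribed affine support. Writing $\theta$ for the angular parameter and using $1/\kappa=s_{\theta\theta}+s$, the affine support function becomes $\sigma(\theta)=s(\theta)\,(s_{\theta\theta}(\theta)+s(\theta))^{1/3}$. For a fixed $c>0$, the origin-centered ellipses with constant affine support $\sigma\equiv c$ are exactly the positive $\pi$-periodic solutions of the Ermakov--Pinney equation $y_{\theta\theta}+y=c^{3}y^{-3}$; these are $y=\sqrt{z^{\top}Qz}$ with $z=(\cos\theta,\sin\theta)$ and $Q$ positive definite with $\det Q=c^{3}$, and such an ellipse has area $\pi c^{3/2}$, which is precisely the content of Remark~\ref{re: relation affine support and area}. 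In these terms the two desired ellipses are: $\mathcal{E}_{out}$ is any $Q$ with $\det Q=M^{3}$ and $z^{\top}Qz\ge s^{2}$ on $\mathbb{S}^1$ (i.e.\ $K\subseteq\mathcal{E}_{out}$), while $\mathcal{E}_{in}$ is any $Q'$ with $\det Q'=m^{3}$ and $z^{\top}Q'z\le s^{2}$ (i.e.\ $\mathcal{E}_{in}\subseteq K$).

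The strategy is an extremal-ellipse-plus-scaling argument. First I would introduce the minimal-area origin-centered ellipse $\mathcal{E}_{circ}$ containing $K$ and the maximal-area origin-centered ellipse $\mathcal{E}_{insc}$ contained in $K$; both exist and are unique for an origin-symmetric body. The elementary point is that affine support scales as $\sigma_{\lambda E}=\lambda^{4/3}\sigma_{E}$ while dilation preserves inclusion. Hence, \emph{once} it is known that $\sigma_{\mathcal{E}_{circ}}\le M$ and $\sigma_{\mathcal{E}_{insc}}\ge m$, one simply dilates: enlarge $\mathcal{E}_{circ}$ by $\lambda=(M/\sigma_{\mathcal{E}_{circ}})^{3/4}\ge 1$ to obtain $\mathcal{E}_{out}$ with $\sigma\equiv M$ still containing $K$, and shrink $\mathcal{E}_{insc}$ by $(m/\sigma_{\mathcal{E}_{insc}})^{3/4}\le 1$ to obtain $\mathcal{E}_{in}$ with $\sigma\equiv m$ still inside $K$; by Remark~\ref{re: relation affine support and area} these have exactly the required areas.

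The one-sided inequalities that come for free are via a contact-point maximum principle. At a point $\theta_0$ where $\partial K$ touches $\partial\mathcal{E}_{circ}$, the nonnegative function $s_{\mathcal{E}_{circ}}-s$ attains a minimum, so its second derivative is nonnegative there; comparing radii of curvature gives $s_{\mathcal{E}_{circ},\theta\theta}(\theta_0)+s_{\mathcal{E}_{circ}}(\theta_0)\ge s_{\theta\theta}(\theta_0)+s(\theta_0)$, whence $\sigma_{\mathcal{E}_{circ}}\ge\sigma(\theta_0)\ge m$. The dual computation at a contact point of $\mathcal{E}_{insc}$ gives $\sigma_{\mathcal{E}_{insc}}\le M$. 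Unfortunately these are the \emph{wrong} directions for the scaling step above.

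The hard part, and the main obstacle, will be the sharp reverse bounds $\sigma_{\mathcal{E}_{circ}}\le M$ and $\sigma_{\mathcal{E}_{insc}}\ge m$. Contact points alone cannot supply them, and the elementary global estimates are not sharp: combining the identity $A(K)=\tfrac12\int_{0}^{2\pi}\sigma^{3}s^{-2}\,d\theta$ with $\sigma\le M$ and the planar Blaschke--Santal\'o inequality only yields $A(K)\le\pi M^{3/2}$, and John's inclusion $K\subseteq\sqrt{2}\,\mathcal{E}_{insc}$ merely produces a containing ellipse of affine support $2^{2/3}\sigma_{\mathcal{E}_{insc}}\le 2^{2/3}M$, losing a factor $2^{2/3}$. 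Getting the exact constant forces one to use the pinch $m\le\sigma\le M$ \emph{globally}, which is exactly what the affine normal flow supplies: origin-centered ellipses are self-similar solutions along which the affine support obeys $\sigma(t)=\sigma(0)-\tfrac43 t$, so the Containment Principle (Lemma~\ref{prop: containment principle}) lets one compare the evolution of $K$ with these shrinking ellipses and thereby transfer the sharp affine-support bounds from $K$ to its extremal ellipses. Making this comparison precise—ensuring the constants are the sharp $M$ and $m$ rather than $2^{2/3}M$ and $2^{-2/3}m$—is where the real work lies.
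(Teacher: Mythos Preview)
The paper does not actually prove Lemma~\ref{lem: ellipsoid app}; it is quoted verbatim from \cite{ivaki} and used as a black box. So there is no ``paper's own proof'' to compare against here. That said, your proposal has a genuine gap that would persist regardless of how the cited reference argues.

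Your diagnosis of the difficulty is correct: the touching-point maximum principle yields $\sigma_{\mathcal{E}_{\mathrm{circ}}}\ge\min\sigma_K$ and $\sigma_{\mathcal{E}_{\mathrm{insc}}}\le\max\sigma_K$, which are the wrong directions for the scaling step. The problem is that your proposed remedy---the Containment Principle together with the self-similar evolution $\sigma_{\mathcal E}(t)=\sigma_{\mathcal E}(0)-\tfrac{4}{3}t$---produces \emph{exactly the same wrong-direction inequalities}. Indeed, an ellipse with affine support $c$ has extinction time $\tfrac{3}{4}c$, and the maximum principle for $\sigma$ under the affine normal flow gives $\tfrac{3}{4}m\le T(K)\le\tfrac{3}{4}M$ for the extinction time of $K$. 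Containment then says: for any inscribed ellipse $\mathcal E\subseteq K$ one has $\tfrac{3}{4}\sigma_{\mathcal E}=T(\mathcal E)\le T(K)\le\tfrac{3}{4}M$, hence $\sigma_{\mathcal E}\le M$; and for any circumscribed ellipse $K\subseteq\mathcal E$ one has $\tfrac{3}{4}m\le T(K)\le T(\mathcal E)=\tfrac{3}{4}\sigma_{\mathcal E}$, hence $\sigma_{\mathcal E}\ge m$. These are precisely the bounds you already obtained at a contact point, not the reverse bounds $\sigma_{\mathcal E_{\mathrm{insc}}}\ge m$ and $\sigma_{\mathcal E_{\mathrm{circ}}}\le M$ that your scaling argument requires. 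In short, the Containment Principle is a comparison principle and, like the pointwise contact argument, it can only push the extremal-ellipse $\sigma$ \emph{away} from the opposite bound of $\sigma_K$, never toward it.

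So the ``real work'' you flag at the end is not just a matter of making constants precise; the mechanism you propose cannot deliver the needed inequalities at all. A proof of the lemma has to use the hypothesis $m\le\sigma\le M$ in a genuinely global way---for instance through an ODE comparison for the Ermakov--Pinney equation $y''+y=c^{3}y^{-3}$ that controls the support function on a full period, or through the polar duality $\sigma_{K}(p)\,\sigma_{K^{*}}(p^{*})=1$ combined with such an argument---rather than through any contact/containment comparison, which is intrinsically one-sided in the wrong direction.
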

\begin{lemma}\label{lem: minmax of affine}
Let $K$ be an origin-symmetric, smooth convex body with area $\pi$. Then
$$\min_{\partial K} \sigma\leq 1\leq\max_{\partial K}\sigma.$$
\end{lemma}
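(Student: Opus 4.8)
The plan is to deduce this directly from Lemma~\ref{lem: ellipsoid app} together with the area normalization $A(K)=\pi$, using the area formula for the bounding ellipses recorded in Remark~\ref{re: relation affine support and area}. First I would observe that since $K$ is origin-symmetric and smooth, the origin lies in the interior of $K$ and $\partial K$ is strictly convex, so $\langle x,\mathbf{n}_K(x)\rangle>0$ and $\kappa(x)>0$ for all $x\in\partial K$; hence the affine support function $\sigma(x)=\langle x,\mathbf{n}_K(x)\rangle/\kappa^{1/3}(x)$ is continuous and strictly positive on the compact set $\partial K$. Consequently it attains a positive minimum $m:=\min_{\partial K}\sigma$ and a positive maximum $M:=\max_{\partial K}\sigma$, and trivially $m\le\sigma\le M$.

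Next I would apply Lemma~\ref{lem: ellipsoid app} with exactly these values of $m$ and $M$. This produces ellipses $\mathcal{E}_{in}$ and $\mathcal{E}_{out}$ with $\mathcal{E}_{in}\subseteq K\subseteq\mathcal{E}_{out}$ and
\[
\left(\frac{A(\mathcal{E}_{in})}{\pi}\right)^{2/3}=m,\qquad \left(\frac{A(\mathcal{E}_{out})}{\pi}\right)^{2/3}=M,
\]
equivalently $A(\mathcal{E}_{in})=\pi m^{3/2}$ and $A(\mathcal{E}_{out})=\pi M^{3/2}$. The inclusions $\mathcal{E}_{in}\subseteq K\subseteq\mathcal{E}_{out}$ are monotone with respect to area, so $\pi m^{3/2}=A(\mathcal{E}_{in})\le A(K)=\pi$ and $\pi=A(K)\le A(\mathcal{E}_{out})=\pi M^{3/2}$. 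The first inequality gives $m^{3/2}\le 1$, i.e.\ $\min_{\partial K}\sigma=m\le 1$; the second gives $M^{3/2}\ge 1$, i.e.\ $\max_{\partial K}\sigma=M\ge 1$. Combining the two yields $\min_{\partial K}\sigma\le 1\le\max_{\partial K}\sigma$, which is the claim.

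I do not anticipate any real obstacle here: the only inputs are Lemma~\ref{lem: ellipsoid app}, the elementary monotonicity of area under inclusion, and the arithmetic translation between $\sigma$-bounds and ellipse areas from Remark~\ref{re: relation affine support and area}. The one point that deserves a sentence of care is the positivity and attainment of $m$ and $M$, so that Lemma~\ref{lem: ellipsoid app} may be invoked with finite positive constants; this is immediate from compactness of $\partial K$ and strict convexity plus the origin being interior. Heuristically, the lemma just says that a body of the same area as the unit-area ellipse cannot have affine support function everywhere above (respectively below) that of the ellipse, since it would then be sandwiched by a strictly larger (respectively smaller) ellipse.
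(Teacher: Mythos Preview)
Your proof is correct and follows essentially the same approach as the paper: both invoke Lemma~\ref{lem: ellipsoid app} to sandwich $K$ between ellipses whose areas are determined by the extremal values of $\sigma$, and then compare areas using $A(K)=\pi$. The only cosmetic difference is that the paper phrases the argument by contradiction (assuming $\min\sigma>1$ or $\max\sigma<1$), whereas you argue directly from the inclusions.
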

\begin{proof}
The claim follows from Lemma \ref{lem: ellipsoid app}: If $\min\limits_{\mathbb{S}^1} \sigma>1$, then there is an ellipse $\mathcal{E}_{in}$ which is contained in $K$ and satisfies
$\left(\frac{A(\mathcal{E}_{in})}{\pi}\right)^{2/3}>1.$  This implies that $A(\mathcal{E}_{in})>\pi.$ Similarly, if $\max\limits_{\mathbb{S}^1} \sigma<1$, then there is an ellipse $\mathcal{E}_{out}$ which contains $K$ and has the area $A(\mathcal{E}_{out})<\pi.$ In both cases we reach to a contradiction as the area of $K$ is $\pi.$
\end{proof}
We state the following important Theorem 5 from \cite{BA3}. We denote an origin-centered disk of radius $r>0$ by $B_r.$
\begin{theorem}[Controlling Hausdorff distance I]\label{thm: Ben}\cite{BA3}
Let $\{K_t\}$ be a smooth, strictly convex solution of the evolution equation (\ref{e: flow0}). Then
$$s(z,t)\ge s(z,0)-\left(\frac{4}{3}\right)^{\frac{3}{4}}\frac{c_2}{c_1}t^{\frac{3}{4}},$$
for $t\in\left(0,\frac{3}{4}c_1^{\frac{4}{3}}\right).$
In particular,
$$K\subseteq K_t+ \left(\frac{4}{3}\right)^{\frac{3}{4}}\frac{c_2}{c_1}t^{\frac{3}{4}}B_1$$
for $t\in\left(0,\frac{3}{4}c_1^{\frac{4}{3}}\right).$
\end{theorem}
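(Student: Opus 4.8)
The plan is to track the support function pointwise under the flow and reduce the statement to an integral estimate for the normal speed. Writing $s(z,t)$ for the support function of $K_t$ and differentiating $s(z,t)=\langle x,z\rangle$ at the boundary point with outer normal $z$, the tangential part of the velocity is orthogonal to $z$ and so does not contribute, whence (\ref{e: flow0}) gives
\[ \partial_t s(z,t)=\langle \partial_t x,z\rangle=-\kappa^{\frac13}\langle \mathbf n_{K_t},z\rangle=-\kappa^{\frac13}(z,t)\le 0. \]
Since $\kappa^{1/3}=(s_{\theta\theta}+s)^{-1/3}\ge 0$, the support function is pointwise nonincreasing in $t$ and
\[ s(z,0)-s(z,t)=\int_0^t \kappa^{\frac13}(z,\tau)\,d\tau. \]
Thus it suffices to bound this time-integral of the speed by $\left(\tfrac43\right)^{3/4}\tfrac{c_2}{c_1}t^{3/4}$.

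Next I would record the a priori bounds the flow provides. By John's lemma $c_1\le s(\cdot,0)\le c_2$, and since $s$ is nonincreasing in $t$ we keep $s(\cdot,t)\le c_2$ throughout. The disk $B_{c_1}\subseteq K_0$ evolves through concentric disks whose radius $\rho(\tau)$ solves $\rho'=-\rho^{-1/3}$, $\rho(0)=c_1$, i.e. $\rho(\tau)=\left(c_1^{4/3}-\tfrac43\tau\right)^{3/4}$, extinct precisely at $T_\ast=\tfrac34 c_1^{4/3}$. The Containment Principle (Lemma \ref{prop: containment principle}) then yields $B_{\rho(\tau)}\subseteq K_\tau$, hence $s(\cdot,\tau)\ge\rho(\tau)$ for $\tau\in(0,T_\ast)$. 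The heart of the matter is the pointwise speed estimate
\[ \kappa^{\frac13}(z,\tau)\le \frac{c_2}{c_1}\left(c_1^{4/3}-\tfrac43\tau\right)^{-1/4},\qquad \tau\in(0,T_\ast). \]

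To establish this I would differentiate the flow once more: the radius of curvature $u=s_{\theta\theta}+s=\kappa^{-1}$ evolves by $\partial_t u=-(v_{\theta\theta}+v)$ with $v:=\kappa^{1/3}=u^{-1/3}$, so the speed satisfies the parabolic equation
\[ \partial_t v=\tfrac13 v^4\,(v_{\theta\theta}+v). \]
I would then run a Tso-type maximum principle on an auxiliary quantity built from $v$ and $s$, of the form $Q=v/(s-a)$ with $0<a<c_1$. At a spatial maximum of $Q$ the constraint $Q_{\theta\theta}\le 0$ converts $v_{\theta\theta}$ into the algebraic quantity $s_{\theta\theta}=v^{-3}-s$, and the bounds $\rho(\tau)\le s\le c_2$ then produce a reaction term with a strong negative fifth-order damping. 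This is what forgets the (possibly unbounded) initial curvature—reflecting the instantaneous smoothing of the flow—and pins the scale-invariant quantity $v\,(c_1^{4/3}-\tfrac43\tau)^{1/4}$ at the constant $c_2/c_1$. This smoothing speed estimate is the main obstacle: it is exactly where the parabolic, rather than merely set-theoretic, nature of the flow is indispensable, since the Containment Principle alone can only bound the support function and \emph{lower}-bound the curvature, never bound the curvature from above.

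Finally I would integrate. Using the speed estimate together with $\int_0^t \left(c_1^{4/3}-\tfrac43\tau\right)^{-1/4}\,d\tau=c_1-\left(c_1^{4/3}-\tfrac43 t\right)^{3/4}$, and the subadditivity of $x\mapsto x^{3/4}$ on $[0,\infty)$, which gives $c_1-\left(c_1^{4/3}-\tfrac43 t\right)^{3/4}\le\left(\tfrac43 t\right)^{3/4}$, I obtain
\[ s(z,0)-s(z,t)\le \frac{c_2}{c_1}\left(c_1-\left(c_1^{4/3}-\tfrac43 t\right)^{3/4}\right)\le\left(\tfrac43\right)^{3/4}\frac{c_2}{c_1}\,t^{3/4}, \]
which is the asserted inequality. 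For the final containment, set $r=\left(\tfrac43\right)^{3/4}\tfrac{c_2}{c_1}t^{3/4}$; the inequality reads $s_K(z)\le s_{K_t}(z)+r=s_{K_t+rB_1}(z)$ for every $z$, and since inclusion of convex bodies is equivalent to the corresponding inequality of support functions, this is exactly $K\subseteq K_t+rB_1$.
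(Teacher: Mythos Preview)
The paper does not supply a proof here; Theorem~\ref{thm: Ben} is quoted verbatim from Andrews \cite{BA3}, so there is nothing in the present paper to compare your argument against. Your overall plan---differentiate the support function, reduce to a time-integrated speed bound, and control the speed $\kappa^{1/3}$ by a Tso-type maximum principle applied to $Q=v/(s-a)$---is the right one and is in fact what Andrews does. The final step (integrating and using concavity of $x\mapsto x^{3/4}$ to pass from displacement to the containment $K\subseteq K_t+rB_1$) is fine.

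The genuine gap is the form of your speed estimate. You claim
\[
\kappa^{1/3}(z,\tau)\le \frac{c_2}{c_1}\Bigl(c_1^{4/3}-\tfrac{4}{3}\tau\Bigr)^{-1/4},
\]
an inequality that is already false at $\tau=0$: it would force $\kappa^{1/3}(\cdot,0)\le c_2 c_1^{-4/3}$, but the hypotheses impose no upper bound on the initial curvature (you even acknowledge ``possibly unbounded initial curvature''). The ODE inequality the Tso argument actually produces at the spatial maximum of $Q$ is of the shape $\partial_t Q_{\max}\le CQ_{\max}^2-cQ_{\max}^5$, whose large-data behaviour integrates to $Q_{\max}(t)\le C't^{-1/4}$, i.e.\ a bound that blows up as $\tau\to 0^+$, not as $\tau\to T_\ast^-$. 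This is exactly the ``instantaneous smoothing'' you mention, and it is incompatible with a bound finite at $\tau=0$. With the corrected estimate $v\le \tilde C\,\tau^{-1/4}$ (where $\tilde C$ depends on $c_1,c_2$), the integral $\int_0^t \tau^{-1/4}\,d\tau=\tfrac{4}{3}t^{3/4}$ still yields the $t^{3/4}$ displacement, and tracking constants through Andrews' computation gives the stated coefficient. So your sketch is salvageable, but the displayed pointwise speed bound and the sentence about ``pinning $v(c_1^{4/3}-\tfrac{4}{3}\tau)^{1/4}$ at $c_2/c_1$'' must be replaced by the correct smoothing estimate.
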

Let $\mathcal{E}$ be an ellipse. We denote its semi-minor and semi-major axes by $a_{\mathcal{E}}$ and $b_\mathcal{E}$, respectively. We also need the following simple lemma.
\begin{lemma}[Controlling Hausdorff distance II]\label{e: ellipsoid app2}
Let $\mathcal{E}$ be an ellipse centered at the origin of the plane such that $\mathcal{E}\subseteq B_R$. Then we have
\begin{equation*}
d_{\mathcal{H}}(\mathcal{E},B_R)\leq \frac{A(B_R)-A(\mathcal{E})}{\pi\left(\frac{A(B_R)}{\pi}\right)^{\frac{1}{2}}}.
\end{equation*}
\end{lemma}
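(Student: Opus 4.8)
The plan is to reduce the statement to an explicit description of the support function of an origin-centered ellipse. After a rotation (which changes neither $d_{\mathcal{H}}(\mathcal{E},B_R)$, nor the areas, nor the axis lengths) I would write
$$\mathcal{E}=\left\{(x,y)\in\mathbb{R}^2:\ \frac{x^2}{a_{\mathcal{E}}^2}+\frac{y^2}{b_{\mathcal{E}}^2}\leq 1\right\},$$
so that its support function in the unit direction $(\cos\theta,\sin\theta)$ is $s_{\mathcal{E}}(\theta)=\sqrt{a_{\mathcal{E}}^2\cos^2\theta+b_{\mathcal{E}}^2\sin^2\theta}$. Since $a_{\mathcal{E}}\leq b_{\mathcal{E}}$, this quantity takes values in $[a_{\mathcal{E}},b_{\mathcal{E}}]$, and in particular $\min_{\mathbb{S}^1}s_{\mathcal{E}}=a_{\mathcal{E}}$.

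Next I would use the hypothesis $\mathcal{E}\subseteq B_R$ in two ways. First, it gives $s_{\mathcal{E}}\leq s_{B_R}\equiv R$ pointwise on $\mathbb{S}^1$, hence
$$d_{\mathcal{H}}(\mathcal{E},B_R)=\max_{\mathbb{S}^1}\bigl(R-s_{\mathcal{E}}\bigr)=R-\min_{\mathbb{S}^1}s_{\mathcal{E}}=R-a_{\mathcal{E}}.$$
Second, the inclusion forces the farthest point of $\partial\mathcal{E}$ from the origin, which lies at distance $b_{\mathcal{E}}$, to satisfy $b_{\mathcal{E}}\leq R$.

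Finally I would rewrite the right-hand side of the claimed inequality using $A(B_R)=\pi R^2$ and $A(\mathcal{E})=\pi a_{\mathcal{E}}b_{\mathcal{E}}$, obtaining
$$\frac{A(B_R)-A(\mathcal{E})}{\pi\bigl(A(B_R)/\pi\bigr)^{1/2}}=\frac{\pi R^2-\pi a_{\mathcal{E}}b_{\mathcal{E}}}{\pi R}=R-\frac{a_{\mathcal{E}}b_{\mathcal{E}}}{R}\ \geq\ R-a_{\mathcal{E}},$$
where the last step is exactly $b_{\mathcal{E}}\leq R$. Comparing with the formula $d_{\mathcal{H}}(\mathcal{E},B_R)=R-a_{\mathcal{E}}$ above completes the argument. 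I do not expect any real obstacle here; the only points needing (routine) verification are that the minimum of the ellipse's support function equals its semi-minor axis, and that the Hausdorff distance to a concentric ball of larger radius is governed by that minimum.
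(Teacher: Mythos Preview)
Your argument is correct and essentially identical to the paper's: both use $d_{\mathcal{H}}(\mathcal{E},B_R)\le R-a_{\mathcal{E}}$, the area formula $A(\mathcal{E})=\pi a_{\mathcal{E}}b_{\mathcal{E}}$, and the bound $b_{\mathcal{E}}\le R$ to reach the stated inequality. The only cosmetic difference is that you justify $d_{\mathcal{H}}(\mathcal{E},B_R)=R-a_{\mathcal{E}}$ via the explicit support function, while the paper simply writes the inequality $d_{\mathcal{H}}(\mathcal{E},B_R)\le R-a_{\mathcal{E}}$ and then rewrites $a_{\mathcal{E}}=A(\mathcal{E})/(\pi b_{\mathcal{E}})$.
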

\begin{proof}
We have
\begin{align*}
d_{\mathcal{H}}(\mathcal{E},B_R)&\leq R-a_{\mathcal{E}}\\
&=\left(\frac{A(B_R)}{\pi}\right)^{\frac{1}{2}}-\frac{A(\mathcal{E})}{\pi b_{\mathcal{E}}}\\
&\leq\left(\frac{A(B_R)}{\pi}\right)^{\frac{1}{2}}-\frac{A(\mathcal{E})}{\pi R}=\frac{A(B_R)-A(\mathcal{E})}{\pi\left(\frac{A(B_R)}{\pi}\right)^{\frac{1}{2}}}.
\end{align*}
The proof is complete.
\end{proof}

\subsection{Proof of the main theorem}
In this section we present a proof of the stability of the $p$-affine isoperimetric inequality.
\begin{proof}
Let $p>1$ and $0<\varepsilon_p<\frac{1}{2}$. The upper bound on $\varepsilon_p$ will be determined later at the end of this section. Assume that
\begin{equation}\label{ie: assum}
\left(\frac{\Omega_p^{2+p}(K)}{2^{2+p}A^{2-p}(K)}\right)^{\frac{1}{p}}>\pi^2(1-\varepsilon_p).
\end{equation}
Then from Lemma \ref{lem: controlling derivative of $l$-affine length along affine normal flow} and Lemma \ref{lem: ev of A} it follows that
\begin{align}\label{e: derv of omega}
\frac{d}{dt}\left(\frac{\Omega_p^{2+p}(K_t)}{A^{2-p}(K_t)}\right)^{\frac{1}{p}}&=
\frac{1}{p}\left(\frac{\Omega_p^{2+p}(K_t)}{A^{2-p}(K_t)}\right)^{\frac{1}{p}-1}\frac{d}{dt}\left(\frac{\Omega_p^{2+p}(K_t)}{A^{2-p}(K_t)}\right)\nonumber\\
&\geq d_p\left(\frac{\Omega_p^{2+p}(K_t)}{A^{2-p}(K_t)}\right)^{\frac{1}{p}-1}\frac{\Omega_p^{p+1}(K_t)}{A^{2-p}(K_t)}\int_{\partial K_t}
\left(\sigma^{\frac{1}{2}-\frac{3p}{2(p+2)}}\right)_{\mathfrak{s}}^2d\mathfrak{s}\nonumber\\
&=\frac{d_p}{\Omega_p(K_t)}\left(\frac{\Omega_p^{2+p}(K_t)}{A^{2-p}(K_t)}\right)^{\frac{1}{p}}\int_{\partial K_t}
\left(\sigma^{\frac{1}{2}-\frac{3p}{2(p+2)}}\right)_{\mathfrak{s}}^2d\mathfrak{s}
\end{align}
where $d_p$ is defined as follows
$$d_p:=\left\{
         \begin{array}{ll}
           \frac{2(4p^2+3p+2)}{p(p-1)}, & \hbox{if}~ 1<p\leq 2, \\
           \frac{6(p+2)}{(p-1)^2}, & \hbox{if}~ p\geq2.
         \end{array}
       \right.$$

We integrate both sides of the inequality (\ref{e: derv of omega}) on the time interval $[0,\delta]$ with respect to $dt.$
\begin{align*}
\int_0^{\delta}\frac{d}{dt}\left(\frac{\Omega_p^{2+p}(K_t)}{A^{2-p}(K_t)}\right)^{\frac{1}{p}}dt
&\geq \int_0^{\delta}\frac{d_p}{\Omega_p(K_t)}\left(\frac{\Omega_p^{2+p}(K_t)}{A^{2-p}(K_t)}\right)^{\frac{1}{p}}\int_{\partial K_t}
\left(\sigma^{\frac{1}{2}-\frac{3p}{2(p+2)}}\right)_{\mathfrak{s}}^2d\mathfrak{s}dt\\
&\geq \int_0^{\delta}\min_{t\in[0,\delta]}\left(\frac{d_p}{\Omega_p(K_t)}\left(\frac{\Omega_p^{2+p}(K_t)}{A^{2-p}(K_t)}\right)^{\frac{1}{p}}\int_{\partial K_t}
\left(\sigma^{\frac{1}{2}-\frac{3p}{2(p+2)}}\right)_{\mathfrak{s}}^2d\mathfrak{s}\right)dt\\
&=\frac{d_p\delta}{\Omega_p(K_{t_{\ast}})}\left(\frac{\Omega_p^{2+p}(K_{t_{\ast}})}{A^{2-p}(K_{t_{\ast}})}\right)^{\frac{1}{p}}\int_{\partial K_{t_{\ast}}}
\left(\sigma^{\frac{1}{2}-\frac{3p}{2(p+2)}}\right)_{\mathfrak{s}}^2d\mathfrak{s}
\end{align*}
where $t_{\ast}$ is the time such that $\min\limits_{t\in[0,\delta]}\frac{1}{\Omega_p(K_t)}\left(\frac{\Omega_p^{2+p}(K_t)}{A^{2-p}(K_t)}\right)^{\frac{1}{p}}\int_{\partial K_t}
\left(\sigma^{\frac{1}{2}-\frac{3p}{2(p+2)}}\right)_{\mathfrak{s}}^2d\mathfrak{s}$ is achieved.
Therefore, using the H\"{o}lder inequality we find
$$2^{\frac{p+2}{p}}\pi^2\varepsilon_p\geq\frac{d_p\delta}{\Omega_1(K_{t_{\ast}})\Omega_p(K_{t_{\ast}})}
\left(\frac{\Omega_p^{2+p}(K_{t_{\ast}})}{A^{2-p}(K_{t_{\ast}})}\right)^{\frac{1}{p}} \left(\sigma_M^{\frac{1}{2}-\frac{3p}{2(p+2)}}(t_{\ast})-\sigma_m^{\frac{1}{2}-\frac{3p}{2(p+2)}}(t_{\ast})\right)^2.$$
Here, $\sigma_M^{\frac{1}{2}-\frac{3p}{2(p+2)}}(t_{\ast})$ and $\sigma_m^{\frac{1}{2}-\frac{3p}{2(p+2)}}(t_{\ast})$ are, respectively, the maximum and the minimum of $\sigma^{\frac{1}{2}-\frac{3p}{2(p+2)}}$ on $\partial K_{t_{\ast}}.$
It follows that
\begin{equation}\label{ie: fund}
\sqrt{\frac{2^{\frac{p+2}{p}}\pi^2\Omega_1(K_{t_{\ast}})\Omega_p(K_{t_{\ast}})\varepsilon_p}{d_p\delta}}\geq \left(\frac{\Omega_p^{2+p}(K_{t_{\ast}})}{A^{2-p}(K_{t_{\ast}})}\right)^{\frac{1}{2p}} \left(\sigma_M^{\frac{1}{2}-\frac{3p}{2(p+2)}}(t_{\ast})-\sigma_m^{\frac{1}{2}-\frac{3p}{2(p+2)}}(t_{\ast})\right).
\end{equation}
To bound $\Omega_1(K_{t_{\ast}})\Omega_p(K_{t_{\ast}})$ from above we need to consider two cases. Let $1<p\leq2$. By Lemma \ref{lem: ev of A} we have $A(K_{t_{\ast}})\leq A(K)=\pi.$ Therefore, by the affine isoperimetric inequality and the $p$-affine isoperimetric inequality we infer that
$$\Omega_1(K_{t_{\ast}})\leq2\pi^{\frac{2}{3}}A^{\frac{1}{3}}(K_{t_{\ast}})\leq2\pi,$$
$$\Omega_p(K_{t_{\ast}})\leq2\pi^{\frac{2p}{p+2}}A^{\frac{2-p}{p+2}}(K_{t_{\ast}})\leq2\pi,$$
and thus $\Omega_1(K_{t_{\ast}})\Omega_p(K_{t_{\ast}})\leq 4\pi^2.$\\
Now we proceed to deal with the case $p>2.$ Recall from the evolution equation of the area, Lemma \ref{lem: ev of A}, that
$$\frac{d}{dt}A(K_t)=-\Omega_1(K_t)\geq -2\pi,$$
hence
$$A(K_{\delta})\geq A(K)-2\pi\delta=\pi(1-2\delta).$$
If $\delta<\frac{1}{4}$ then $A(K_{\delta})>\frac{\pi}{2}$. In particular, this yields that $A(K_{t_{\ast}})>\frac{\pi}{2}.$ This observation combined with the $p$-affine isoperimetric inequality imply that
$$\Omega_p(K_{t_{\ast}})\leq2\pi^{\frac{2p}{p+2}}A^{\frac{2-p}{p+2}}(K_{t_{\ast}})\leq2^{\frac{2p}{p+2}}\pi.$$
As $\Omega_1(K_{t_{\ast}})\leq 2\pi$ we get $\Omega_1(K_{t_{\ast}})\Omega_p(K_{t_{\ast}})\leq 2^{\frac{2p+2}{p+2}}\pi^2<4\pi^2$. Consequently,
assuming $\delta<\frac{1}{4}$ together with inequalities (\ref{ie: assum}) and (\ref{ie: fund}) yield
$$\left(\sigma_M^{\frac{1}{2}-\frac{3p}{2(p+2)}}(t_{\ast})-\sigma_m^{\frac{1}{2}-\frac{3p}{2(p+2)}}(t_{\ast})\right)\leq \frac{2\sqrt{2}\pi}{\sqrt{d_p}}\sqrt{\frac{\varepsilon_p}{\delta}}.$$

Define $d_p':=\frac{2\sqrt{2}\pi}{\sqrt{d_p}}.$ Multiplying $K_{t_{\ast}}$ by a factor $\lambda$, depending on $\delta$, where $\lambda\geq1$ so that $A(\lambda K_{t_{\ast}})=\pi$. Note that $\lim\limits_{\delta\to0}\lambda=1.$ In particular, by this assumption and Lemma \ref{lem: minmax of affine} we have
$$1\in\left[\lambda^{\frac{(1-p)}{3(p+2)}}\sigma_m^{\frac{1-p}{2+p}}(t_{\ast}),
\lambda^{\frac{(1-p)}{3(p+2)}}\sigma_M^{\frac{1-p}{2+p}}(t_{\ast})\right].$$
As a result,
$$\lambda^{\frac{4(1-p)}{3(p+2)}}\sigma_M^{\frac{1-p}{2+p}}(t_{\ast})\leq d'_p\sqrt{\frac{\varepsilon_p}{\delta}}+1,$$
and
$$\lambda^{\frac{4(1-p)}{3(p+2)}}\sigma_m^{\frac{1-p}{2+p}}(t_{\ast})\geq 1-d'_p\sqrt{\frac{\varepsilon_p}{\delta}}.$$
Let us assume for now that
\begin{align}\label{e: assumption}
1-d'_p\sqrt{\frac{\varepsilon_p}{\delta}}>0.
\end{align}
Consequently,
 \begin{equation*}
 \frac{1}{\left(1+d'_p\sqrt{\frac{\varepsilon_p}{\delta}}\right)^{\frac{p+2}{p-1}}}\leq\lambda^{\frac{4}{3}}\sigma(t_{\ast})\leq \frac{1}{\left(1-d'_p\sqrt{\frac{\varepsilon_p}{\delta}}\right)^{\frac{p+2}{p-1}}}.
 \end{equation*}
From the last inequality and Lemma \ref{lem: ellipsoid app} we deduce that there exist two ellipses, denoted by $\mathcal{E}_{in}$ and $\mathcal{E}_{out}$ , such that
\begin{equation}\label{e: squeezed}
\mathcal{E}_{in}\subseteq K_{t_{\ast}}\subseteq \mathcal{E}_{out},
\end{equation}
and
$$\left(\frac{A(\mathcal{E}_{out})}{\pi}\right)^{2/3}=\frac{\lambda^{-\frac{4}{3}}}{\left(1-d'_p\sqrt{\frac{\varepsilon_p}{\delta}}\right)^{\frac{p+2}{p-1}}},~
\left(\frac{A(\mathcal{E}_{in})}{\pi}\right)^{2/3}=\frac{\lambda^{-\frac{4}{3}}}{\left(1+d'_p\sqrt{\frac{\varepsilon_p}{\delta}}\right)^{\frac{p+2}{p-1}}}.$$
On the other hand, let us assume that $\delta<\frac{3}{4}c_1^{\frac{4}{3}}$, then by Theorem \ref{thm: Ben}
\begin{equation}\label{e: distance from the intial data}
K_{t_{\ast}}\subseteq K\subseteq K_{t_{\ast}}+\left(\frac{4}{3}\right)^{\frac{3}{4}}\frac{c_2}{c_1}t_{\ast}^{\frac{3}{4}}B_1\subseteq K_{t_{\ast}}+\left(\frac{4}{3}\right)^{\frac{3}{4}}\frac{c_2}{c_1}\delta^{\frac{3}{4}}B_1.
\end{equation}
Combining relations (\ref{e: squeezed}) and (\ref{e: distance from the intial data}) we find
$$\mathcal{E}_{in}\subseteq K\subseteq\mathcal{E}_{out}+\left(\frac{4}{3}\right)^{\frac{3}{4}}\frac{c_2}{c_1}\delta^{\frac{3}{4}}B_1.$$
Set $\delta:=\varepsilon_p^{\frac{\beta}{2+\beta}}$, for a positive $\beta$, in the previous inequality. For $\varepsilon_p<\left(\frac{3}{4}c_1^{\frac{4}{3}}\right)^{\frac{2+\beta}{\beta}},$ we have
\begin{equation}\label{e: final step}
\mathcal{E}_{in}\subseteq K\subseteq\mathcal{E}_{out}+\left(\frac{4}{3}\right)^{\frac{3}{4}}\frac{c_2}{c_1}\varepsilon_p^{\frac{3\beta}{4(2+\beta)}}B_1,
\end{equation}
and
\begin{equation}\label{e: ellipses relations}
\left(\frac{A(\mathcal{E}_{out})}{\pi}\right)^{2/3}=\frac{\lambda^{-\frac{4}{3}}}{\left(1-d'_p\varepsilon_p^{\frac{1}{2+\beta}}\right)^{\frac{p+2}{p-1}}},~
\left(\frac{A(\mathcal{E}_{in})}{\pi}\right)^{2/3}=\frac{\lambda^{-\frac{4}{3}}}{\left(1+d'_p\varepsilon_p^{\frac{1}{2+\beta}}\right)^{\frac{p+2}{p-1}}}.
\end{equation}
We now get back to the assumption (\ref{e: assumption}). If we choose $\varepsilon_p<\left(\frac{1}{d'_p}\right)^{2+\beta}$
then
$$1-d'_p\varepsilon_p^{\frac{1}{2+\beta}}>0.$$
On the other hand, $\delta:=\varepsilon_p^{\frac{\beta}{2+\beta}}<\frac{1}{4}$ and $\varepsilon_p<\left(\frac{3}{4}c_1^{\frac{4}{3}}\right)^{\frac{2+\beta}{\beta}}.$
Therefore, choosing
$$\varepsilon_p<\min\left\{\left(\frac{1}{4}\right)^{\frac{1+\beta}{\beta}},
\left(\frac{1}{d'_p}\right)^{2+\beta},\left(\frac{3}{4}c_1^{\frac{4}{3}}\right)^{\frac{2+\beta}{\beta}}\right\}$$ guarantees that both assumptions (\ref{e: assumption}) and (\ref{e: distance from the intial data}) hold.

Recall that $B_{c_1}\subseteq K$. Therefore, by Lemma \ref{prop: containment principle}, $B_{c_1/2}\subseteq K_t$ for $t\in[0,\eta],$ for an $\eta$ independent of $K$. Precisely, $\eta=\frac{3}{4}c_1^{\frac{4}{3}}\left(1-\left(\frac{1}{2}\right)^{\frac{4}{3}}\right)$ is the time that $B_{c_1}$ shrinks to $B_{c_1/2}$ under the affine normal flow. If we choose $\delta=\varepsilon_p^{\frac{\beta}{2+\beta}}<\eta,$ then from (\ref{e: squeezed}) we get
$$B_{c_1/2}\subseteq K_{t_{\ast}}\subseteq \mathcal{E}_{out}.$$
 From this we conclude that, if
 $$\varepsilon_p<\min\left\{\left(\frac{1}{4}\right)^{\frac{1+\beta}{\beta}},
\left(\frac{1}{d'_p}\right)^{2+\beta},
\left(\frac{3}{4}c_1^{\frac{4}{3}}\right)^{\frac{2+\beta}{\beta}}\left(1-\left(\frac{1}{2}\right)^{\frac{4}{3}}\right)^{\frac{(2+\beta)}{\beta}}
\right\}$$
 then,
$$\left(\frac{4}{3}\right)^{\frac{3}{4}}\frac{c_2}{c_1}\varepsilon_p^{\frac{3\beta}{4(2+\beta)}}B_1= \left(\frac{4}{3}\right)^{\frac{3}{4}}\frac{2c_2}{c_1^2}\varepsilon_p^{\frac{3\beta}{4(2+\beta)}}B_{c_1/2}\subseteq \left(\frac{4}{3}\right)^{\frac{3}{4}}\frac{2c_2}{c_1^2}\varepsilon_p^{\frac{3\beta}{4(2+\beta)}}\mathcal{E}_{out}.$$
By (\ref{e: final step}) we find
\begin{equation}\label{e: final step2}
\mathcal{E}_{in}\subseteq K\subseteq \left(1+\left(\frac{4}{3}\right)^{\frac{3}{4}}\frac{2c_2}{c_1^2}\varepsilon_p^{\frac{3\beta}{4(2+\beta)}}\right)\mathcal{E}_{out}.
\end{equation}
We apply a special linear transformation, $T\in SL(2)$, such that $T\mathcal{E}_{out}$ is a disk. Consequently, by relation (\ref{e: final step2}) we get
\begin{equation}\label{e: final step1}
T\mathcal{E}_{in}\subseteq TK\subseteq \left(1+\left(\frac{4}{3}\right)^{\frac{3}{4}}\frac{2c_2}{c_1^2}\varepsilon_p^{\frac{3\beta}{4(2+\beta)}}\right)T\mathcal{E}_{out}.
\end{equation}
Now from the facts that $\mathcal{E}_{in}\subseteq \mathcal{E}_{out}$, area is invariant under special linear transformations, Lemma \ref{e: ellipsoid app2}, and identities in (\ref{e: ellipses relations}) and we have
\begin{align*}
d_{\mathcal{H}}\left(T\mathcal{E}_{in},\left(1+\left(\frac{4}{3}\right)^{\frac{3}{4}}\frac{2c_2}{c_1^2}\varepsilon_p^{\frac{3\beta}{4(2+\beta)}}\right)
T\mathcal{E}_{out}\right)&\leq \frac{\left(1+\left(\frac{4}{3}\right)^{\frac{3}{4}}\frac{2c_2}{c_1^2}\varepsilon_p^{\frac{3\beta}{4(2+\beta)}}\right)^2A(\mathcal{E}_{out})
-A(\mathcal{E}_{in})}
{\left(1+\left(\frac{4}{3}\right)^{\frac{3}{4}}\frac{2c_2}{c_1^2}\varepsilon_p^{\frac{3\beta}{4(2+\beta)}}\right)\pi\left(\frac{A(\mathcal{E}_{out})}{\pi}\right)^{\frac{1}{2}}}.
\end{align*}
Therefore $d_{\mathcal{H}}\left(T\mathcal{E}_{in},\left(1+\left(\frac{4}{3}\right)^{\frac{3}{4}}\frac{2c_2}{c_1^2}\varepsilon_p^{\frac{3\beta}{4(2+\beta)}}\right)
T\mathcal{E}_{out}\right)$ is bounded by
\begin{align*}
&\frac{1}{\lambda}\frac{\left(1-d'_p\varepsilon_p^{\frac{1}{2+\beta}}\right)^{\frac{3(p+2)}{4(p-1)}}}{\left(1+\left(\frac{4}{3}\right)^{\frac{3}{4}}\frac{2c_2}{c_1^2}
\varepsilon_p^{\frac{3\beta}{4(2+\beta)}}\right)}
\left(\frac{\left(1+\left(\frac{4}{3}\right)^{\frac{3}{4}}\frac{2c_2}{c_1^2}\varepsilon_p^{\frac{3\beta}{4(2+\beta)}}\right)^2}{\left(1-d'_p
\varepsilon_p^{\frac{1}{2+\beta}}\right)^{\frac{3(p+2)}{2(p-1)}}}
-\frac{1}{\left(1+d'_p\varepsilon_p^{\frac{1}{2+\beta}}\right)^{\frac{3(p+2)}{2(p-1)}}}\right)\\
&\leq
\frac{\left(1+\left(\frac{4}{3}\right)^{\frac{3}{4}}\frac{2c_2}{c_1^2}\varepsilon_p^{\frac{3\beta}{4(2+\beta)}}\right)^2}{\left(1-d'_p
\varepsilon_p^{\frac{1}{2+\beta}}\right)^{\frac{3(p+2)}{2(p-1)}}}
-\frac{1}{\left(1+d'_p\varepsilon_p^{\frac{1}{2+\beta}}\right)^{\frac{3(p+2)}{2(p-1)}}}.
\end{align*}

To optimize the Hausdorff distance we set $\beta=\frac{4}{3}.$ Observe that
\begin{align*}
\lim_{\varepsilon_p\to0^{+}}
\frac{\frac{\left(1+\left(\frac{4}{3}\right)^{\frac{3}{4}}\frac{2c_2}{c_1^2}\varepsilon_p^{\frac{3}{10}}\right)^2}{\left(1-d_p'
\varepsilon_p^{\frac{3}{10}}\right)^{\frac{3(p+2)}{2(p-1)}}}
-\frac{1}{\left(1+d_p'\varepsilon_p^{\frac{3}{10}}\right)^{\frac{3(p+2)}{2(p-1)}}}}{\varepsilon_p^{\frac{3}{10}}}&=
2\left(1+\left(\frac{4}{3}\right)^{\frac{3}{4}}\frac{2c_2}{c_1^2}+\frac{3(p+2)}{2(p-1)}d_p'\right).
\end{align*}
Define $\mathcal{E}:=T\mathcal{E}_{in}$, $\mathcal{D}:=T\mathcal{E}_{out}$ and $C_p:=3\left(1+\left(\frac{4}{3}\right)^{\frac{3}{4}}\frac{2c_2}{c_1^2}+\frac{3(p+2)}{2(p-1)}d_p'\right).$ Therefore, choosing $\varepsilon_p$ small enough implies the claim for $p>1.$

To complete the proof of the main theorem we need to address the case $p=1$. We note that if
\begin{equation*}
\frac{\Omega_1^{3}(K)}{8A(K)}>\pi^2(1-\varepsilon),
\end{equation*}
then for every $p>1$
\begin{equation*}
\left(\frac{\Omega_p^{2+p}(K)}{2^{2+p}A^{2-p}(K)}\right)^{\frac{1}{p}}>\pi^2(1-\varepsilon).
\end{equation*}
This is because the function $p\mapsto \left(\frac{\Omega_p^{2+p}(K)}{2^{2+p}A^{2-p}(K)}\right)^{\frac{1}{p}}$ is increasing, \cite{Lutwak2}. Hence, to prove the stability of the affine isoperimetric inequality we can continue the argument for the stability of the $p$-affine isoperimetric inequality, for example with $p=2.$ In this case $C_1=C_2.$ The proof is now complete.
\end{proof}
\textbf{Acknowledgment.}
I would like to thank Alina Stancu and K\'{a}roly B\"{o}r\"{o}czky for comments and suggestions that have improved the initial manuscript.
I am indebted to two referees for the very careful reading of the original submission.
\bibliographystyle{amsplain}

\end{document}